\newtheorem{theorem}{Theorem}[section]
\newcommand{\comments}[1]{}
\renewcommand{\phi}{\varphi}
\newcommand{\R}{\mathbb R}
\newcommand{\nn}{\nonumber}
\newcommand{\ra}{\rightarrow}
\newcommand{\h}{\mathbb H}
\newcommand{\dt}{\frac{d}{dt}}
\newcommand{\lra}{\longrightarrow}
\newcommand{\lan}{\langle}
\newcommand{\ran}{\rangle}
\newcommand{\han}{\hat{a}^{(n)}}
\newcommand{\hbn}{\hat{b}^{(n)}}
\newcommand{\hun}{\hat{u}^{(n)}}
\newcommand{\hvn}{\hat{v}^{(n)}}
\newcommand{\hvnn}{\hat{v}^{(n-1)}}
\newcommand{\hann}{\hat{a}^{(n-1)}}
\newcommand{\hunn}{\hat{u}^{(n-1)}}
\newcommand{\hbnn}{\hat{b}^{(n-1)}}
\newcommand{\tu}{\widetilde{\bar{u}}}
\newcommand{\tb}{\widetilde{\bar{b}}}
\def\intav#1{\mathchoice
          {\mathop{\vrule width 6pt height 3 pt depth -2.5pt
                  \kern -9pt \intop}\nolimits_{\kern -6pt#1}}%
          {\mathop{\vrule width 5pt height 3 pt depth -2.6pt
                  \kern -6pt \intop}\nolimits_{#1}}%
          {\mathop{\vrule width 5pt height 3 pt depth -2.6pt
                  \kern -6pt \intop}\nolimits_{#1}}%
          {\mathop{\vrule width 5pt height 3 pt depth -2.6pt
                  \kern -6pt \intop}\nolimits_{#1}}}
\newcommand{\charfn}[1]{{\raisebox{1.2pt}{\mbox{$\chi
_{\kern-1pt\lower3pt\hbox{{$\scriptstyle{#1}$}}}$}}}}
  \def \R{\mathcal R}
\def\<{\langle} \def\>{\rangle}
\begin{document}
\title[Cross-Chemotaxis Model]{Cross-Chemotaxis System Derived from an Atherosclerotic Plaque Model}

\author{Jonathan Bell}
\address{Department of Mathematics \& Statistics, 
University of Maryland Baltimore County,
1000 Hilltop Circle, Baltimore, MD-21250, USA.}
\email{jbell@umbc.edu}
\author{Animikh Biswas$^{\dagger}$}
\address{Department of Mathematics \& Statistics, 
University of Maryland Baltimore County,
1000 Hilltop Circle, Baltimore, MD-21250, USA.}
\email{abiswas@umbc.edu}
\subjclass{35Q92, 35K40, 35A01, 92C17}
 \keywords{chemotaxis, atherosclerotic plaque, arterial plaque, global existence, positivity, advection-diffusion}
 \thanks{${}^{\dagger}$ \textit{Corresponding author}}
 \thanks{For A.B., this research was partly supported by NSF grant DMS15-17027. For J.B., travel assistance was provided by the Simons Foundation grant 283689 }

\maketitle
\begin{abstract}
In modeling the inflammatory response to a lesion in an artery wall, there are a number of chemotactic mechanisms going on within the wall layer that lead to an arterial plaque. We introduce a rather reduced model of these dynamic processes, but the focus of this paper is a subsystem of the full model system that has independent interest. Namely, the system here consists of two cell densities, each producing a chemical that is a chemoattractant for the other cell type. Then we prove positivity, local and global existence, and discuss some qualitative behavior of solutions. 
\end{abstract}

\section{Introduction}
The following investigation came out of a project to model the development of an atherosclerotic plaque, and specifically to follow the development of the plaque's fibrous cap and its possible degradation. This led to a sub model of the form
\begin{equation} \label{sys}
\left \{ \begin{array}{l}
u_{t}=D_{1}u_{xx}-\alpha_{1}(ua_{x})_{x} - \mu u \\ \\
v_{t}=D_{2}v_{xx}-\alpha_{2}(vb_{x})_{x} + \rho(b)f(v) \\ \\
a_{t}=D_{3}a_{xx} + \beta_{1}v - \mu_{a}a \\ \\
b_{t}=D_{4}b_{xx} + \beta_{2}u - \mu_{b}b
\end{array} \right.
\end{equation}
along with initial conditions
\begin{equation} \label{ics}
\mbox{At $t=0:$} \quad \quad u=u_{0}(x)\;,\; v=v_{0}(x)\;,\; a=a_{0}(x)\;,\;b=b_{0}(x) \;\;.
\end{equation}  
Here $u$ and $v$ represent two different types of cell densities within an arterial cell layer, the intima, which we denote rather arbitrarily by $0 < x < 1$. Also, $u_{0} \geq 0$, $v_{0} \geq 0$, $a_{0} \geq 0$, $b_{0} \geq 0$ on domain $x \in [0,1]$. So system (\ref{sys}) is defined on domain $\Omega_{T}=\{ (x,t): 0<x<1\;,\;0<t<T \}$. Cell type $v$ (respectively, $u$) produces a chemoattractant chemical $a$ (resp. $b$) for cell type $u$ (resp. $v$), with chemoattractant affinities (positive constant) parameters being $\alpha_{1}$ and $\alpha_{2}$. All the theoretical work the authors are aware of, whether in one or higher space dimensions, has involved a single cell type producing the chemoattractant concentration (``self-chemotaxis''). A feature not usually included in chemotaxis models is the addition of a death term for $u$, death rate $\mu > 0$, and a proliferation term for $v$, namely a (Verhulst) growth rate, where the intrinsic growth parameter $\rho$ depends on the chemoattractant $b$. 

For the boundary conditions, let 
\begin{equation} \label{bcs} \begin{array}{l}
\mbox{At $x=0$: $a_{x}=b_{x}=0$\;;\; so $u_{x}=0$ and $-v_{x}=h(b(0,t))$} \\ \\
\mbox{At $x=1$: $a_{x}=b_{x}=0$\;;\; so $u_{x}=g(t)$ and $v_{x}=0$ \quad   $0 \leq t \leq T$}
\end{array}
\end{equation}

More details on the functions $\rho, f, h,$ and $g$ will be given below. The original goal in considering this system was to understand if the modeling allowed the two cell densities to concentrate near $x=1$ where the fibrous cap, which separates the plaque from the blood, develops. Figure 1 gives an example case of a numerical solution that does just that. 

There is a large and growing literature on the analysis of chemotaxis systems. A couple of good review articles, though slightly dated, are Horstmann[8] and Hillen and Painter [5]. See also the books of Perthame [15] and Suzuki [16]. Global existence for 1D models was accomplished by Osaki and Yagi [12]. For higher space dimensions, various global results are in, for example, Horstmann[7], Gajewski and Zacharias [4], Biler [3], Wrzosek [18], and Hillen and Potapov [6]. These contributions all involve a single cell population and single chemoattractant, and often have homogeneous Neumann or Dirichlet boundary conditions. A notable exception is the early work of Lauffenburger, et al [10], which has a kinetic term on the cell concentration equation, and nonlinear boundary conditions. While most chemotaxis models do not include death or kinetic terms for cell populations, a few that do include Painter and Hillen [14], Little, et al [11], Osaki, et al [13], and Wrzosek [18], and the previously mentioned work of Lauffenburger, et al [10] (and follow-on work of Wang [17] and Zeng [19]). Our model here, besides having two cell types and two ``cross-chemoattractants'',  has a nonlinear kinetic term and nonlinear boundary conditions.  Our system can be put into Amann's `separated divergence form' (see Amann [1]); so a local (smooth) solution can be obtained this way, but we chose an alternative fixed-point approach. Our existence results are closest in flavor to that of Hillen and Potapov [6], though our analysis differs significantly. Also, our system seems to have most of the characteristics of Amann's normally elliptic systems (Amann [2]), but we elected not to pursue that approach to global existence.

In the next section we mention some background leading to problem (\ref{sys})-(\ref{bcs}). The sections after that will concern analysis of the cross-chemotaxis model (\ref{sys}), and a future paper will consider the analysis of the full plaque model. Section 3 is concerned with local existence of a solution, while section 4 deals with positivity of solutions. Section 5 develops global bounds on the solution and hence a global solution, then we discuss more formal qualitative solution behavior in section 6. The final section will discuss possible extensions and their relevance to other physical problems.

\section{Outline of Model Development}
Arteries are layered structures with the inner basal membrane contacting the blood made of endothelial cells, followed outwardly by the intima layer, then the media layer, and finally external layers of more structural material. An arterial plaque is a lesion that develops in the intima layer. It is made up of immune cells, cell debris, lipids, fibrous connective tissue, etc. Arterial plaque formation and growth involves complex chemical, hemodynamic, and biomechanics processes. There are basically two types of plaques. Stable plaques that mainly cause no detrimental clinical effects, unless it becomes sufficiently large to compromise blood flow. Then there are unstable, or vulnerable, plaques that are high risk of rupturing and causing thrombosis. The latter plaques generally have larger lipid cores and thinner fibrous caps than stable plaques. We are interested in why some plaques develop into vulnerable plaques, and the modeling has concentrated on plaque processes that couple to cap dynamics. Plaque vulnerability is tied directly to degradation of the fibrous cap, and we believe a one space dimensional model is sufficient to address this issue.  

Our biological model considers some principal chemical processes, neglecting the hemodynamic and most of the biomechanical processes, the endothelial layer mechanisms, plaque growth aspects, and the complex three-dimensional geometry. It primarily investigates mechanisms for migration of smooth muscle cells and immune cells (macrophages) toward the endothelial layer, which provide critical extracellular matrix material, mainly collagen, to build the cap, and chemicals that tend to degrade the cap. 

Chemoattractant activity is important to this fibrous cap development. There is production of macrophage chemoattractants, like chemotactic peptide-1, from smooth muscle cells, and production of growth factors, like PDGF-B, from macrophages that play a role in smooth muscle cell migration and proliferation within the plaque. A full discussion of the model development, with analysis,  will be done elsewhere. Our interest here is the analysis of the above (sub)model involving two cell types, macrophage population represented by the scaled $u$ variable, and smooth muscle cells, represented by $v$, and two chemoattractants, $a$ and $b$. 

\section{Existence of a Local Solution}
We will now state the assumptions.
\begin{enumerate}
\item All parameters $D_i, \alpha_i, \beta_i, \mu, \mu_a, \mu_b >0$.
\item $\rho:\R \lra \R_+$  is continuous, bounded, and uniformly Lipschitz on bounded sets.
\item $f:\R \lra \R$ such that $f$ is uniformly Lipschitz on compact sets, $f(v) \le 0$ for $v \ge K$, and $f(v) \ge 0\ \forall\ v <K$. The number $K$ is referred to as the carrying capacity of the population. 
\item $h:\R \lra \R_+, h(0)=0$ and $h$ is bounded.
\item $g:\R_+ \lra \R_+, g(0)=0$ and $\int_0^T g^2(s)ds < \infty $ for all $T>0$.
\end{enumerate}
{\bf Notation:} The norm $\|\cdot\|$ will always denote the $L^2$ norm with respect to the space variable
$x$ . Also, $\lan \cdot, \cdot \ran$ will be the inner product. Furthermore, in all the computations below, $C$ will denote a generic, time independent constant, which may depend on the parameters indicated above but not on the initial data, and whose value may change from one line of the computation to the next.
\begin{theorem}
There exists a local (in time) weak solution of the above system on $[0,T]$ for some $T>0$ with the following regularity properties:
\begin{align*}
& u,v, a, b \in C([0,T]; L^2) \cap L^2 ([0,T]; \h^1)\ \mbox{and additionally,} \\
& a,b \in L^\infty([0,T]; \h^1) \cap L^2 ([0,T]; \h^2).
\end{align*}
\end{theorem}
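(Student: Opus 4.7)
The plan is a two-step fixed-point argument in the space
\[
Y_T := \bigl(C([0,T];L^2)\cap L^2([0,T];\h^1)\bigr)^2,
\]
iterating on the pair $(\bar u,\bar v)$. Given such a pair in a closed ball $B_R\subset Y_T$, I would first solve the two decoupled linear Neumann heat equations
\begin{align*}
a_t - D_3 a_{xx} + \mu_a a &= \beta_1 \bar v, \\
b_t - D_4 b_{xx} + \mu_b b &= \beta_2 \bar u,
\end{align*}
for initial data in $\h^1$; standard parabolic regularity places $a,b$ in $L^\infty([0,T];\h^1)\cap L^2([0,T];\h^2)$, with norms controlled linearly by $\|\bar u\|_{Y_T}$ and $\|\bar v\|_{Y_T}$. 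With $(a,b)$ in hand, I would then solve the (now linear in $(u,v)$) advection-diffusion-reaction equations
\begin{align*}
u_t &= D_1 u_{xx} - \alpha_1 (u a_x)_x - \mu u, \\
v_t &= D_2 v_{xx} - \alpha_2 (v b_x)_x + \rho(b)f(\bar v),
\end{align*}
with the prescribed boundary data $u_x(0,t)=0$, $u_x(1,t)=g(t)$, $-v_x(0,t)=h(b(0,t))$, $v_x(1,t)=0$. The key observation making this subproblem tractable is the one-dimensional embedding $\h^2\hookrightarrow W^{1,\infty}$, so the advective coefficients $a_x, b_x$ lie in $L^2_t L^\infty_x$; a Faedo--Galerkin construction then delivers weak solutions $u,v\in Y_T$, defining the map $\Phi:B_R\to Y_T$, $(\bar u,\bar v)\mapsto (u,v)$.

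The second step is the a priori energy estimate. Testing the $u$-equation against $u$ and integrating by parts yields
\[
\tfrac12 \dt \|u\|^2 + D_1\|u_x\|^2 + \mu\|u\|^2 = D_1 u(1,t)g(t) + \alpha_1\int_0^1 u\, a_x u_x \, dx,
\]
the chemotactic boundary contribution vanishing because $a_x$ does at $x=0,1$. The flux term is controlled via the 1D trace $u(1,t)^2\le \|u\|^2 + 2\|u\|\|u_x\|$ combined with $g\in L^2(0,T)$, and the advection term is absorbed into $D_1\|u_x\|^2$ at the price of $C\|a_x\|_\infty^2\|u\|^2$; since $\int_0^T\|a_x\|_\infty^2\,dt$ is controlled by the $\h^2$ bound on $a$, Gronwall closes the estimate. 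The analogous computation for $v$ produces the boundary term $D_2 v(0,t)h(b(0,t))$, controlled via boundedness of $h$ and trace, and a reaction term controlled via boundedness of $\rho$ and the Lipschitz bound on $f$ restricted to the (bounded) range of $\bar v$. Together these give $\Phi(B_R)\subset B_R$ for $R$ large enough and $T$ small enough.

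Contraction is then established by the same energy estimates applied to the differences $\tilde a,\tilde b,\tilde u,\tilde v$ corresponding to two choices $(\bar u_i,\bar v_i)\in B_R$: the differences $\tilde a,\tilde b$ solve the linear heat equations with zero initial data and forcings $\beta_i(\bar v_1-\bar v_2),\beta_i(\bar u_1-\bar u_2)$, and $\tilde u,\tilde v$ solve the resulting difference equations with additional forcing $(u_2\tilde a_x)_x$ and its $v$-analogue; the boundary term becomes $h(b_1(0,t))-h(b_2(0,t))$, handled by a Lipschitz bound of $h$ on the bounded range of $b_i(0,\cdot)$ (or, failing that, by a Schauder fixed-point variant using Aubin--Lions compactness of the image in $L^2_t L^2_x$). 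A Gronwall inequality with prefactor tending to zero as $T\to 0$ yields contraction on $B_R$, and the Banach fixed-point theorem produces the unique local weak solution whose regularity is exactly that inherited from the two solver steps.

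The hard part is the interplay between the inhomogeneous, partially nonlinear boundary conditions ($u_x(1,t)=g(t)$ and $-v_x(0,t)=h(b(0,t))$) and the chemotactic advection terms $(u a_x)_x$ and $(v b_x)_x$. Unlike in the pure homogeneous Neumann case, the diffusion-generated boundary contributions in each energy identity are nonzero and must be absorbed into the diffusion dissipation via careful use of the one-dimensional trace inequality; this is exactly what dictates the hypothesis $g\in L^2(0,T)$ and the boundedness of $h$. Correspondingly, making sense of the advection terms in the weak formulation forces $a_x, b_x\in L^2_t L^\infty_x$, requiring one full extra degree of spatial regularity for $a,b$, which accounts for the asymmetric regularity statement of the theorem.
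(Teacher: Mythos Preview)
Your plan is essentially the same as the paper's: the paper runs the Picard iteration associated precisely with your map $\Phi$ (solve for $(a^{(n)},b^{(n)})$ from $(u^{(n-1)},v^{(n-1)})$, then for $(u^{(n)},v^{(n)})$ using the linearised advection--diffusion with source $\rho(b^{(n)})f(v^{(n-1)})$), proves the iterates stay in a ball via energy estimates, and shows the sequence is contractive in $\max\{\|a_x\|,\|b_x\|,\|u\|,\|v\|\}$ for small $T$. The only notable difference is in the handling of the chemotactic term: where you bound $\alpha_1\int u\,a_x u_x$ by $\|a_x\|_{L^\infty}\|u\|\|u_x\|$ and then invoke $a\in L^2_t\h^2$, the paper instead interpolates on $u$ via $\|u\|_{L^\infty}\lesssim\|u\|^{1/2}\|u_x\|^{1/2}$, obtaining $C\|a_x\|^4\|u\|^2$ after Young with exponents $4/3,4$; this uses only the $L^\infty_t\h^1$ bound on $a$ and avoids the $\h^2$ norm in the Gronwall factor. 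Both routes close. Your explicit attention to the boundary contribution $h(b_1(0,t))-h(b_2(0,t))$ in the contraction step is well placed (and arguably more careful than the paper, which does not isolate this term).
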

\begin{proof}
Given $(u^{(n-1)}, v^{(n-1)}, a^{(n-1)}, b^{(n-1)})$ define $(a^{(n)},b^{(n)})$ by
\begin{gather}   \label{anbneq}
a_t^{(n)}=D_3a^{(n)}_{xx} + \beta_1 v^{(n-1)}-\mu_aa^{(n)} \  \mbox{and}\\
b_t^{(n)}=D_4b^{(n)}_{xx} + \beta_2 u^{(n-1)}-\mu_bb^{(n)},
\end{gather}
with homogeneous Neumann boundary conditions, and define 
\begin{gather}  \label{unvneq}
u_t^{(n)} =D_1u^{(n)}_{xx} -\alpha_1 (u^{(n)}a^{(n)}_x)_x -\mu u^{(n)}\ \mbox{and}\\
v_t^{(n)} =D_2v^{(n)}_{xx} -\alpha_2 (v^{(n)}b^{(n)}_x)_x +\rho(b^{(n)})f(v^{(n-1)}).
\end{gather}
Let 
\[
K = 4\max\{\|a_0\|_{\h^1}^2, \|b_0\|_{\h^1}^2, \|u_0\|^2, \|v_0\|^2\}.
\]
Assume that
\begin{gather}  \label{intbd}
\begin{array}{l}
 \sup_{0 < t \le T}\max\left\{\|a^{(n-1)}\|_{\h^1}^2, \|b^{(n-1)}\|_{\h^1}^2, \|u^{(n-1)}\|^2, \|v^{(n-1)}\|^2\right\} \le K, \\
 \\
  \max\left\{ \frac{D_3}{4} \int_0^T \|a^{(n-1)}_{xx}\|^2, 
 \frac{D_4}{4} \int_0^T \|b^{(n-1)}_{xx}\|^2, \frac{D_1}{4} \int_0^T \|u^{(n-1)}_{x}\|^2,
 \frac{D_2}{4} \int_0^T \|v^{(n-1)}_{x}\|^2\right\}\\
  \le K.
 \end{array}
 \end{gather}
 We will prove that the same bounds hold for $(u^{(n)},v^{(n)},a^{(n)},b^{(n)})$.  Taking inner product with $a^{(n)}_{xx}$ in \eqref{anbneq} and integrating by parts, we obtain
 \[
 \frac12 \dt \|a_x^{(n)}\|^2 + D_3 \|a^{(n)}_{xx}\|^2 = -\beta_1 \lan v^{(n-1)},a_{xx}^{(n)}\ran - \mu_a \|a_x^{(n)}\|^2,
 \]
  whence, using Young's inequality, we get
 \begin{gather}  \label{anineq}
 \frac12  \dt  \|a_x^{(n)}\|^2 + \frac{D_3}{2} \|a^{(n)}_{xx}\|^2 \le C\|v^{(n-1)}\|^2-\mu_a\|a_x^{(n)}\|^2.
 \end{gather}
 From \eqref{anineq}, we obtain
 \[
 \frac12 \dt  \|a_x^{(n)}\|^2 +\mu_a\|a_x^{(n)}\|^2 \le C \|v^{(n-1)}\|^2.
 \]
 Thus, by Gronwall's inequality, we arrive at
 \begin{gather*}   
 \|a_x^{(n)}\|^2 \le e^{-\mu_a T}\|a_0\|_{\h^1}^2 + C\int_0^T e^{-\mu_a(t-s)}\|v^{(n-1)}\|^2 \\
 \le e^{-\mu_a T}\|a_0\|_{\h^1}^2 + CTK,
 \end{gather*}
 where in the last inequality, we used \eqref{intbd}. Provided $T$ is sufficiently small
 (namely, $T \le \frac{4}{C}$), we have $\|a_x^{(n)}\|^2 \le K$.
 Inserting this bound in \eqref{anineq} and integrating, we get
 \[
 \|a_x^{(n)}\|^2 + \frac{D_3}{2} \int_0^T \|a_{xx}^{(n)}\|^2 
 \le C \int_0^T \|v^{(n-1)}\|^2 
 \le CTK  \le K,
 \]
 provided $T$ is sufficiently small, which in turn also implies 
 \[
 \frac{D_3}{2} \int_0^T \|a_{xx}^{(n)}\|^2  \le K.
 \]
 The computation for $b^{(n)}$ is similar.
 
 We will now obtain analogous estimates for $u^{(n)}, v^{(n)}$. Taking $L^2$-inner product of \eqref{unvneq} with $u^{(n)}$ and integrating by parts we obtain
 \begin{gather}   \label{unest1}
 \frac12 \dt \|u^{(n)}\|^2 + D_1 \|u_x^{(n)}\|^2 = \alpha_1 \int u^{(n)}a_x^{(n)}u_x^{(n)} - \mu\|u^{(n)}\|^2
 + D_1g(t)u^{(n)}(1).
 \end{gather}
  Note that 
 \[
 D_1|g(t)u^{(n)}(1)| \le D_1g(t)\|u^{(n)}\|_{L^\infty} \lesssim g(t)\|u_x^{(n)}\|
 \le Cg^2(t) + \frac{D_1}{4} \|u_x^{(n)}\|^2,
 \]
 where $x \lesssim y $ denotes $x \le Cy$ for an adequate  constant $C>0$. The last inequality in the line above is obtained using Young's inequality where  a suitable adjustment of constant is made in the application of Young's inequality so that the term $\|u_x^{(n)}\|^2$ has $\frac{D_1}{4}$ in front.  
 Using the inequality $\|\Psi\|_{L^\infty} \lesssim \|\Psi\|^{1/2} \|\Psi_x\|^{1/2} $ followed by Young's inequality, we get
 \begin{gather*}
 |\int u^{(n)}a_x^{(n)}u_x^{(n)}| \le \|a_x^{(n)}\|\|u_x^{(n)}\| \|u^{(n)}\|^{1/2}\|u_x^{(n)}\|^{1/2} \\
 \le \frac{D_1}{4}\|u_x^{(n)}\|^2 + C \|a_x^{(n)}\|^4\|u^{(n)}\|^2,
 \end{gather*}
 where we   used Young's inequality  $ab \le \frac{a^p}{p}+\frac{b^q}{q}, 1/p+1/q=1$ with $p= 4/3, q=4$.
 Using the above two estimates in \eqref{unest1}, we obtain
 \begin{align}
\frac12  \dt \|u^{(n)}\|^2 + \frac{D_1}{2} \|u_x^{(n)}\|^2 &  \le C g^2(t) + C \|a_x^{(n)}\|^4 \|u^{(n)}\|^2 \nn \\
& \le Cg^2(t)  + CK^2 \|u^{(n)}\|^2.  \label{unest2}
\end{align}
First dropping the term $\frac{D_1}{2} \|u_x^{(n)}\|^2$ from the inequality \eqref{unest2} and applying Gronwall's inequality, we obtain
\[
\|u^{(n)}\|^2 \le e^{CK^2T} \left\{\|u_0\|^2 + \int_0^T g^2 (s)\, ds \right\} \le K,
\]
where the last inequality holds provided $T$ is sufficiently small, where the size of $T \le T'$ depends only on $K$ and $\int_0^{T'} g^2(s)\, ds$ (To see this, note that due to $\int_0^{T'}g^2(s)ds < \infty$, by DCT, 
$\lim_{T \ra 0} \int_0^T g^2(s)\, ds=0$. Recall that $\|u_0\|^2 \le \frac{K}{4}$. Choose $T$ sufficiently small so that $e^{CK^2T} \le 2$ and $\int_0^{T'}g^2(s)ds \le \frac{K}{8}$ ). 
Now inserting this bound for $\|u^{(n)}\|^2$ in \eqref{unest2} and integrating again we obtain
\[
\frac12 \|u^{(n)}\|^2 + \frac{D_1}{2}  \int_0^T \|u_x^{(n)}\|^2 
\le \frac12 \|u_0\|^2+ C\int_0^T g^2 (s) + CK^3 T \le K,
\]
provided $T$ is adequately small. Once again, the  size of $T$ that is permissible depends only
on $K$ and $\int_0^{T'} g^2(s)$. In other words, provided $T\le T'$ is adequately small
(depending only on the initial data through $K$ and $\int_0^{T'} g^2(s)$), the estimate
\eqref{intbd} is satisfied for all $n$. The estimate for $v^{(n)}$ follows in a similar manner.

Next we show that $(a^{(n)}, b^{(n)}, u^{(n)},v^{(n)})$ forms a convergent sequence with respect to 
the norm
\[
\|(a^{(n)}, b^{(n)}, u^{(n)},v^{(n)})\|= \max\{\|a_x^{(n)}\|, \|b_x^{(n)}\|, \|u^{(n)}\|, \|v^{(n)}\|\}; 
\]
in fact it forms a contractive sequence for $T$ adequately small. To that end, denote $\han = a^{(n)} - a^{(n-1)}$ and similarly for $b, u$ and $v$.

From \eqref{anbneq}, it follows that
\[
\han_t = D_3 \han_{x} + \beta_1 \hvnn -\mu_a \han.
\]
As before, we obtain 
\begin{align*}
& \frac12 \dt \|\han_x\|^2 + D_3 \|\han_{xx}\|^2 = \beta_1 \lan\hvnn,\han_{xx}\ran + \mu_a \lan \han,\han_{xx}\ran \\
& \le C\|\hvnn\|^2 + \frac{D_3}{2}\|\han_{xx}\|^2 + \mu_a\|\han_x\|^2,
\end{align*}
which in turn implies
\[
 \|\han_x\|^2 \le CTe^{2\mu_aT}\sup_{0 \le s \le T}\|\hvnn(s)\|^2.
\]
Thus, $\{a^{(n)}\}$ forms a contractive sequence in $\h^1$ provided $T$ is small (i.e. provided,
for instance, if $CTe^{2\mu_aT} \le \frac14$).
The proof for $\{b^{(n)}\}$ is similar.

Consider now $\hun$. We have 
\[
\hun_t = D_1\hun_{xx} -\alpha_1(\hun a^{(n)}_x)_x - \alpha_1(u^{(n-1)}\han_x)_x -\mu \hun.
\]
As before, we obtain
\begin{gather}  \label{udiffeq}
\frac12 \dt \|\hun\|^2 +D_1\|\hun_x\|^2 + \mu \|\hun\|^2
= \alpha_1 \lan u^{(n-1)}\han_x,\hun_x\ran \\ \notag+  \alpha_1 \lan \hun a_x^{(n)},\hun_x\ran.
\end{gather}
Note that
\begin{align}
\alpha_1 |\lan u^{(n-1)}\han_x,\hun_x\ran|
& \le \alpha_1 \|\han\|\|\hun\|\|u^{(n-1)}\|^{1/2}\|u_x^{(n-1)}\|^{1/2}  \nn \\
& \le \frac{D_1}{4} \|\hun_x\|^2 + C\|u^{(n-1)}\|\|u_x^{(n-1)}\|\|\hann_x\|^2. \label{nonlinu1}
\end{align}
Similarly,
\begin{gather}  \label{nonlinu2}
\alpha_1|\lan \hun a_x^{(n)},\hun_x\ran|
\le \frac{D_1}{4}\|\hun_x\|^2 + C \|\hun\|^2\|a_x^{(n)}\|\|a_{xx}^{(n)}\|.
\end{gather}
Inserting the estimates \eqref{nonlinu1} and \eqref{nonlinu2} in \eqref{udiffeq} and recalling that
$\hun_0=0$, we obtain using Gronwall inequality 
\[
\|\hun\|^2 \le C\left( e^{\int_0^TC \|a_x^{(n)}\|\|a_{xx}^{(n)}\|} \int_0^T \|u^{(n-1)}\|\|u_x^{(n-1)}\|\right)
\sup_{0 \le t \le T}\|\hann_x\|^2.
\]
In view of  \eqref{intbd},
\[
\int_0^T \|a_x^{(n)}\|\|a_{xx}^{(n)}\| 
\le CK\sqrt{T}, \int_0^T \|u^{(n-1)}\|\|u_x^{(n-1)}\| \le CK\sqrt{T}.
\]
Thus, provided $T$ is small (the size of $T$ may depend on $K$ but not on $n$), we have
\begin{equation} \label{finaludiff}
\|\hun\|^2 \le \frac14 \|(\hann,\hbnn,\hunn,\hvnn)\|^2.
\end{equation}

A similar estimate holds also for $\hvn$ where we also need to make use of the fact that
$\rho $ and $f$ are uniformly Lipschitz. To see this, note that as in \eqref{udiffeq}, we have
\begin{align}  
& \frac12 \dt \|\hvn\|^2 +D_2\|\hvn_x\|^2 = \alpha_1 \lan v^{(n-1)}\hbn_x,\hvn_x\ran \nn \\
&  + \alpha_1 \lan \hvn b_x^{(n)},\hvn_x\ran+
\lan \rho(b^{(n)})f(v^{(n-1)})-\rho(b^{(n-1)})f(v^{(n-2)}), \hvn \ran. \label{vdiffeq}
\end{align}
Note that
\begin{align*}
&  \rho(b^{(n)})f(v^{(n-1)})-\rho(b^{(n-1)})f(v^{(n-2)})  = \\
& \rho(b^{(n)})(f(v^{(n-1)}) - f(v^{(n-2)})) 
 + f(v^{(n-2)})(\rho(b^{(n)}) - \rho(b^{(n-1)})).
\end{align*}
Using the global Lipschitz property and uniform bound on $\rho, f$, we have
\[
\| \rho(b^{(n)})f(v^{(n-1)})-\rho(b^{(n-1)})f(v^{(n-2)}) \|
\lesssim \|\rho\|_\infty \|\hvnn\| + \|f\|_\infty \|\hbn\|.
\]
Using this observation and following calculations similar to the ones which yield \eqref{finaludiff} starting from the equality in \eqref{udiffeq}, 
we arrive at
\[
\|\hvn\|^2 \le \frac14 \|(\hann,\hbnn,\hunn,\hvnn)\|^2,
\]
starting from \eqref{vdiffeq}.

Thus, we have the final estimate
\[
\|(\han,\hbn,\hun,\hvn)\| \le \frac{1}{2} \|(\hann,\hbnn,\hunn,\hvnn)\|.
\]

\end{proof}
\section{Positivity of Solutions on $\Omega_{T}$}
For a function $\Psi$, define $\Psi_- = \min\{\Psi, 0\}$.
Moreover,  note that if $\partial \Psi  $ denotes a weak partial derivative of $\Psi $, then   
 $\partial \Psi_- = \left\{ \begin{array}{l}
                                                                                          \partial \Psi,\  \mbox{if}\ \Psi < 0 \\
                                                                                           0,\  \mbox{if}\ \Psi \ge  0
                                                                                           \end{array}\right.$ . 
 \subsection{Positivity of $u$}                                                                                         
In what follows, we will omit the limits of integration as it will be inferred from context. All integrals with respect to the space variable will be on the interval $[0,1]$. First observe that
\begin{equation}  \label{imprel}
\left.
\begin{array}{l}
\int u_t u_- = \int \partial_t(u_-) u_- = \frac12 \frac{d}{dt}\|u_-\|^2
\ \mbox{and}\\
  \int u_x \partial_x(u_-) = \int \left(\partial_x(u_-)\right)^2.
\end{array}
\right\}
\end{equation}
We use equivalent expressions to these for $v, a, b$ below. Taking $L^2-$ inner product of the $u$ equation in (\ref{sys}) with $u_-$, integrating by parts and using \eqref{imprel}, we get
\begin{align*}
 \frac12 \dt \|u_-\|^2 & = -D_1 \|(u_-)_x\|^2 + D_1u_xu_-|_{x=0}^{x=1} - \mu \|u_-\|^2  \\
& \quad -\alpha_1 ua_xu_-|_{x=0}^{x=1} + \alpha_1 \int ua_x (u_-)_x.
\end{align*}
Using the boundary conditions \eqref{bcs}, we have $ \alpha_1 ua_xu_-|_{x=0}^{x=1}=0$ and 
\[
u_xu_-|_{x=0}^{x=1}=g(t)u_- = \left\{ \begin{array}{l}
                                                                           0, \  \mbox{if}\ u(1) \ge 0 \quad \quad (\mbox{since}\, u_-(1)=0)\\
                                                                            <0,\  \mbox{if}\  u(1) < 0 \quad (\mbox{}\, g(t)>0, u_-(1)=u(1)<0).
                                                                            \end{array}\right.
\]
Thus, 
\[
ua_xu_-|_{x=0}^{x=1}=0\ \mbox{and}\ u_xu_-|_{x=0}^{x=1} \le 0. 
\]
Using  this together with the fact  $\int ua_x(u_-)_x = \int (u_-)a_x(u_-)_x$, we obtain
\begin{align*}
\frac12 \dt \|u_-\|^2 & \le -D_1 \|(u_-)_x\|^2 - \mu \|u_-\|^2 + \alpha_1 \int (u_-)a_x(u_-)_x \\
& \le -D_1 \|(u_-)_x\|^2 - \mu \|u_-\|^2 + \alpha_1 \|a_x\|_{L^\infty} \|u_-\|\|(u_-)_x\| \\
& \le - \frac{D_1}{2}  \|(u_-)_x\|^2 - \mu \|u_-\|^2 + C \|a_x\|_{L^\infty}^2 \|u_-\|^2,
\end{align*}
where, to obtain the last inequality, we used Young's inequality 
\begin{align*}
& (\|(u_-)_x\|)\alpha_1 \|a_x\|_{L^\infty} \|u_-\| = (\sqrt{D_1}\|(u_-)_x\|)(\frac{1}{\sqrt{D_1}}
\alpha_1\|a_x\|_{L^\infty} \|u_-\|) \\
& \le \frac{D_1}{2}\|(u_-)_x\|^2 + 
C_{\alpha_1, D_1}  \|a_x\|_{L^\infty}^2 \|u_-\|^2.
\end{align*}
Note now that in space dimension one, we have the inequality $\|a_x\|_{L^\infty} \lesssim \|a\|_{\h^2}$. Moreover, by the regularity property of the local solutions, $\int_0^T \|a\|_{\h^2}^2 ds <  \infty $. Using Gronwall inequality, we have
\[
\|u_-\|^2 \le \|(u_0)_-\|^2 e^{C \int_0^t \|a\|_{\h^2}^2}.
\]
It readily follows that $u \ge 0$ for all $t>0$ provided $u_0 \ge 0$ (or equivalently, $(u_0)_ -=0$).

\subsection{Positivity of $v$}  
Taking $L^2-$ inner product of the $v$ equation in (\ref{sys}) with $v_-$,  and integrating by parts,
a similar calculation as in the previous case yields
\begin{align*}
\frac12 \dt \|v_-\|^2 &  = -D_2\|(v_-)_x\|^2 + D_2 v_xv_-|_{x=0}^{x=1} \\
& \quad + \alpha_2 \int (v_-)b_x (v_-)_x   + \int \rho(b)f(v)v_-\,.
\end{align*}
Note that due to the fact that $h \ge 0$, we have
\[
v_xv_-|_{x=0}^{x=1}= \left\{ \begin{array}{ll}
                                                  0, & v(0) \ge 0 \\
                                                  h(b(0,t))v(0),  & v(0) <0 
                                                  \end{array}\right. \quad \le 0.
\]
Moreover, since $\rho \ge 0$ and $f(v) \ge 0$ for $v \le K$, we have $\int \rho (b)f(v)v_- \le 0$.
Thus,
\begin{align*}
\frac12 \dt \|v_-\|^2 & \le  -D_2\|(v_-)_x\|^2 + \alpha_2 \int (v_-)b_x (v_-)_x \\
 & \le -D_2\|(v_-)_x\|^2 + \alpha_2 \|b_x\|_{L^\infty}\|v_-\|\|(v_-)_x\| \\
 & \le - \frac{D_2}{2} \|(v_-)_x\|^2 + C \|b_x\|_{L^\infty}^2\|v_-\|^2.
 \end{align*}
 Using $\|b_x\|_{L^\infty} \lesssim \|b\|_{\h^2}$ and Gronwall inequality, we deduce as before
 \[
 \|v_-\|^2 \le \|(v_0)_-\|^2 e^{C \int_0^t \|b\|_{\h^2}^2}.
\]
It readily follows that $v \ge 0$ for all $t>0$ provided $v_0 \ge 0$ (or equivalently, $(v_0)_- =0$).

\subsection{Positivity of $a, b$}
From the $b$ equation in (\ref{sys}) we get     
\[
\int b_t b_- = D_4\int b_{xx}b_- + \beta_2 \int ub_- - \mu_b \int b b_-.
\]    
Integrating by parts and using the equivalent expressions to (\ref{imprel}) for $b$, we obtain
\[
\frac12 \dt \|b_-\|^2 = -D_4 \|(b_-)_x\|^2 + b_xb_-|_{x=0}^{x=1} - \mu_b\|b_-\|^2 + \beta_2 \int ub_-.
\]      
Note that the homogeneous Neumann boundary condition for $b$ implies that       
$b_xb_-|_{x=0}^{x=1} =0$. Furthermore, we have shown that $u \ge 0$ if $u_0 \ge 0$, which in turn yields
$\int ub_- \le 0$.      Consequently,
\[
\frac12 \dt \|b_-\|^2 \le -D_4 \|(b_-)_x\|^2 - \mu_b\|b_-\|^2 \le 0.
\]
In particular, this implies that if $b_0 \ge 0\ a.e.$, (equivalently,  $(b_0)_-=0\ a.e.$), then 
$b(x,t) \ge 0\ a.e.$.       Proceeding in a similar manner and using the fact that $v \ge 0\ a.e.$ we can deduce that $a \ge 0\ a.e.$      

\section{Global Bounds and Global Solutions on $\Omega_{T}$}
We will need the following Gagliardo-Nirenberg inequalities to proceed:
\begin{subequations}  \label{gn}
\begin{align}
\|\Psi \|_{L^4} & \lesssim \|\Psi\|_{\h^1}^{1/4}\|\Psi \|^{3/4},   \label{gna} \\
\|\Psi \|_{L^4} & \lesssim \|\Psi \|_{\h^1}^{1/2}\|\Psi\|_{L^1}^{1/2}, \label{gnb}\\
\|\Psi \|_{L^\infty}& \lesssim \|\Psi\|_{\h^1}.  \label{gnc}
\end{align}
\end{subequations}
Proceeding as before, and using \eqref{bcs} and the inequality \eqref{gnc}, we get
\begin{align}
\frac12 \dt \|u\|^2 +D_1\|u_x\|^2 & = D_1 g(t)u(1) -\mu\|u\|^2 + \alpha_1 \int ua_xu_x \nn \\
& \le D_1 g(t)\|u\|_{L^\infty} -\mu \|u\|^2 + \alpha_1 \|u_x\|\|a_x\|_{L^4} \|u\|_{L^4} \nn \\
& \le Cg^2(t) + \frac{D_1}{2} \|u_x\|^2 + C \|a_{x}\|_{L^4}^2\|u\|_{L^4}^2 \nn \\
& \le  Cg^2(t) + \frac{D_1}{2} \|u_x\|^2 + C\|u\|_{L^1}\|u_x\|\|a_{xx}\|^{1/2}\|a_x\|^{3/2}, \label{crineq1}
\end{align}
where in the last line we used \eqref{gnb} to bound $\|u\|_{L^4}$ and \eqref{gna} to bound 
$\|a_x\|_{L^4}$. Using Young's inequality, we obtain
\[
C\|u\|_{L^1}\|u_x\|\|a_{xx}\|^{1/2}\|a_x\|^{3/2}
\le \frac{D_1}{4}\|u_x\|^2 + \frac{D_3}{4} \|a_{xx}\|^2 + C \|u\|_{L^1}^4\|a_x\|^6.
\]
Consequently, from the inequality above and \eqref{crineq1}, we obtain
\begin{gather}  \label{crineq2}
\frac12 \dt \|u\|^2 + \frac{D_1}{4} \|u_x\|^2 \le Cg^2(t) + \frac{D_3}{4}\|a_{xx}\|^2+ C \|u\|_{L^1}^4\|a_x\|^6.
\end{gather}
Proceeding in a similar manner, from the $b$ equation  we obtain
\begin{align*}
\frac12 \dt \|v\|^2 + D_2 \|v_x\|^2 & \le h(b(0,t))v(0)+ \alpha_2 \int vb_xv_x + \|\rho\|_\infty \|f\|_\infty \|v\|_{L^1}\\
& \le \|h\|_\infty \|v_x\| + \|\rho\|_\infty \|f\|_\infty \|v\|_{L^1} + \alpha_2|\int vb_xv_x| \\
& \le C \|h\|_\infty^2 + \frac{3D_2}{4}\|v_x\|^2 + \|\rho\|_\infty \|f\|_\infty \|v\|_{L^1} \\
& + \frac{D_4}{4}\|b_{xx}\|^2
+ C\|v\|_{L^1}^4\|b_x\|^6,
\end{align*}
where in the very last line, we also used Young's inequality.
Thus, we get
\begin{equation} \label{crineq3}
\left.
\begin{array} {lcl}
\frac12 \dt \|v\|^2 + \frac{D_2}{4} \|v_x\|^2
 & \le & C \|h\|_\infty^2+ \|\rho\|_\infty \|f\|_\infty \|v\|_{L^1}\\
 & & \  + \frac{D_4}{4}\|b_{xx}\|^2 
+ C\|v\|_{L^1}^4\|b_x\|^6. 
\end{array}
\right\}
\end{equation}
Differentiating the $a$ and $b$ equations with regard to $x$ yields 
\begin{subequations}
\begin{align}  
a_{xt} &= D_3 a_{xxx}+\beta_1v_x - \mu_a a_x \label{eq:derivchem-a} \\
b_{xt}&= D_4b_{xxx} +\beta_2u_x -\mu_b b_x.  \label{eq:derivchem-b}
\end{align}
\end{subequations}
Taking $L^2$ inner product of \eqref{eq:derivchem-a} with $a_x$, integrating by parts 
in the integral $\int a_{xxx}a_x$ and using the boundary conditions $a_x=0$ for $x=0,1$, we get
\begin{align}
\frac12 \dt \|a_x\|^2 + D_3\|a_{xx}\|^2&  \le \beta_1 \|v_x\|\|a_x\| -\mu_a\|a_x\|^2 \nn \\
&  \le \beta_1 \|v_x\|\|a_x\| \le \frac{D_2}{8} \|v_x\|^2 + C \|a_x\|^2, \label{crineq4}
\end{align}
where to obtain the very last inequality, we used Young's inequality
\[
 \|v_x\|\beta_1\|a_x\|= (\frac{\sqrt{D_2}}{2}\|v_x\|)(\beta_1\frac{2}{\sqrt D_2}\|a_x\|)
 \le \frac{D_2}{8}\|v_x\|^2 + C_{\beta_1,D_2}\|a_x\|^2.
 \]
Similarly from \eqref{eq:derivchem-b} we obtain
\begin{gather} \label{crineq5}
\frac12 \dt \|b_x\|^2 + D_4\|b_{xx}\|^2 \le  \frac{D_1}{8} \|u_x\|^2 + C\|b_x\|^2.
\end{gather}
Let $D = \min\{D_1, D_2, D_3, D_4\}$ and add inequalities \eqref{crineq2}, \eqref{crineq3}, \eqref{crineq4}, \eqref{crineq5} to obtain 
\begin{align}
& \frac12 \dt (\|u\|^2+\|v\|^2 + \|a_x\|^2+\|b_x\|^2)+\frac{D}{8} (\|u_x\|^2+\|v_x\|^2+\|a_{xx}\|^2+\|b_{xx}\|^2) 
\nn \\
& \lesssim (g^2(t)+ \|h\|_{\infty}^2)+\|\rho\|_\infty \|f\|_\infty \|v\|_{L^1}+(\|a_x\|^2+\|b_x\|^2) \nn \\
& + \|u\|_{L^1}^4\|a_x\|^6 + \|v\|_{L^1}^4\|b_x\|^6.  \label{crineq6}
\end{align}
Thus, global existence will immediately follow from \eqref{crineq6} provided one can show, for every $T>0$,  
\begin{subequations} \label{bds}
\begin{align}  
& \sup_{t \in [0,T]}\max\{ \|u(t)\|_{L^1},  \|v(t)\|_{L^1}, \|a(t)\|_{L^1}, \|b(t)\|_{L^1}\}\le C_T< \infty ,  \label{L1bd}\\
& \sup_{t \in [0,T]}\max\{ \|a_x(t)\|, \|b_x(t)\|\}\le \tilde{C}_T< \infty,   \label{h1bd}
\end{align}
where the constants $C_T, \tilde{C}_T$ may depend on the initial data, the parameters, and $T$, but are independent of the solution.
\end{subequations}
Indeed,
assume that $[0,T)$ is the maximal interval of existence of weak solutions with the regularity properties mentioned in the previous section with $T<\infty$. We will establish that in such a case, \eqref{L1bd} holds. Thus, if we can show that \eqref{h1bd} also holds, then the solution can be continued and there is no blow-up, i.e. $T=\infty$. In other words, if \eqref{h1bd} holds, then the solution can be continued further contradicting the fact that $[0,T)$ is the maximal interval of existence. Our next goal is to establish \eqref{bds}.

We will first  establish \eqref{L1bd}. Note that due to the non-negativity of $u,v,a,b$,  $\|u\|_{L^1}=\int udx$; a similar relation holds for each $v, a, b$. Denote $\bar u = \int_0^1 u=\|u\|_{L^1}$, where the last equality holds due to non-negativity of $u$.
By taking inner product with the constant function $1$ and integrating by parts in the $u$ equation, and using the boundary conditions, we have
\[
\dt \bar u = D_1u_x|_{x=0}^{x=1}-\mu \bar u=D_1g(t)-\mu\bar u.
\]
It follows immediately that 
\[
\|u(t)\|_{L^1} \le e^{-\mu t}\|u_0\|_{L^1} +D_1 \int_0^t e^{-\mu(t-s)}g(s)ds,
\]
and \eqref{L1bd} immediately follows concerning $u$.

Concerning $b$, proceeding in a similar manner, we have the equation
\[
\dt \bar b = \beta_2\bar u - \mu_b \bar b.
\]
Using the uniform estimate for $\|u\|_{L^1} $ just obtained, it readily follows that 
\[
\sup_{s \in [0,T)}\|b\|_{L^1} \le C_T < \infty. 
\]
Similarly, we can deduce \eqref{L1bd} for $v, a$.

We now proceed to prove the uniform $\h^1$ bound for $(u,v,a,b)$ asserted in \eqref{h1bd}.
Denote the diffusion semigroup on $[0,1]$ with homogeneous Neumann boundary condition as $S(t)$. A well-known result (see, e.g. [6]) we have is
\[
S(t):L^1 \lra W^{\sigma,r}, \|S(t)\| \lesssim \frac{1}{t^{\gamma_4}}\ \mbox{where}\  \gamma_4 = \frac{\sigma}{2} - \frac{1}{2r}+\frac12.
\]
Thus,
\begin{gather}  \label{opnorm}
S(t):L^1 \lra \h^1=W^{1,2},\quad  \|S(t)\|_{L^1 \ra \h^1} \lesssim \frac{1}{t^{3/4}}.
\end{gather}
Denote $F(s) = \beta_1 v(s) - \mu_a a(s)$ and note that due to \eqref{L1bd},
\[
\sup_{s \in [0,T]} \|F(s)\|_{L^1} < \infty  
\]
 and 
\[
a(t) = S(t)a_0 + \int_0^t S(t-s)F(s)ds.
\]
Using \eqref{opnorm} we immediately obtain
\[
\|a(t)\|_{\h^1} \lesssim \|S(t)a_0\|_{\h^1} + \int_0^t \frac{ds}{(t-s)^{3/4}}\sup_{s \in [0,T]}\|F(s)\|_{L^1}
< \infty ,
\]
where we have also used the fact that  $\|S(t)a_0\|_{\h^1} \le C \|a_0\|_{\h^1}$, where $C$ is independent of $T$.

\comments{
Thus,
\[
\frac12 \dt \|u\|^2 \lesssim g^2(t)+ \|a\|_{\h^2}^2\|u\|^2.
\]
By Gronwall inequality, we obtain
\begin{gather}  \label{ubd}
\|u(t)\|^2 \le e^{C \int_0^t \|a\|_{\h^2}^2 ds}\left[ \|u_0\|^2 + C \int_0^t g^2(s)\right].
\end{gather}
On the other hand, it also follows from \eqref{fullineq} that
\[
 \dt \|u\|^2 +D_1\|u_x\|^2 \le C  \|a\|_{\h^2}^2\|u\|^2.
 \]
 Integrating this we also obtain the estimate
 \[
 D_1 \int_0^t \|u_x\|^2 \le \|u_0\|^2 + \left(\sup_{s \in [0,t]}\|u(s)\|^2\right)\int_0^t \|a\|_{\h^2}^2.
 \]
}
\begin{figure}
\centering
\includegraphics[width=1.0\textwidth]{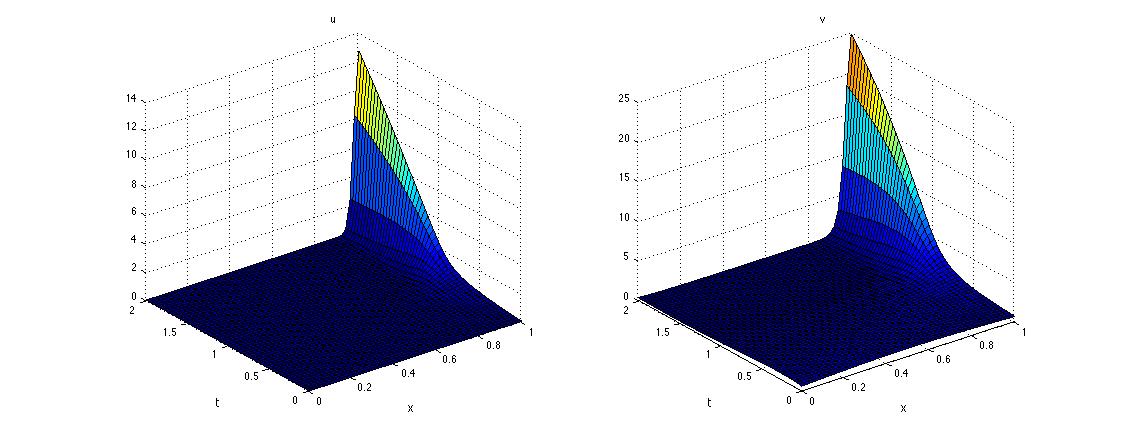}
\caption{This shows the migration of the cell concentrations for $u$ and $v$ to the $x=1$ boundary. Coefficient values were chosen arbitrarily, with $D_{1}=D_{2}=0.1, D_{3}=1, D_{4}=7$, $\alpha_{1}=\alpha_{2}=10$, $\mu+0.001, \mu_{a}=\mu_{b}=1$, $\beta_{1}=1, \beta_{2}=10$. The proliferation term $\rho(b)f(v)$ was replaced with $0.001\;v$, and boundary conditions were linearized, e.g. $-v_{x}(0,t)=b(0,t)$.}
\end{figure}
\section{Further Remarks on Solutions, a Formal Perturbation,  and Numerical Solutions}
Since the full arterial plaque model, from which the system analyzed here, is concerned with the development and possible degradation of a fibrous cap associated with the plaque, our interest in the qualitative behavior of our solution is to have $u$ and $v$ aggregate near the boundary $x=1$ as $t$ gets large. We indicate such behavior via a formal perturbation argument and a numerical simulation, but first we consider the dynamics of the average concentrations.
\subsection{Dynamics of the average concentration}
Let $\bar{u}(t)=\int_{0}^{1}u(x,t)dx$, and similarly for the variables $v, a, b$. Then, from averaging the equations in (\ref{sys}) and using the boundary conditions (\ref{bcs}), we have ($\bar{u}_0=\int_{0}^{1}u(x,0)dx$, etc.) we obtain
\begin{equation*}
\begin{array}{l}
\dt \bar u (t) = g(t) - \mu \bar u (t)\\
\dt \bar a (t) = \beta_1 \bar v (t) -\mu_a \bar a(t)\\
\dt \bar b(t) = \beta_2 \bar u (t) - \mu_b \bar b(t)\\
\dt \bar v = D_2h(b(0,t)) + \int_0^1 \rho(b(x,t))f(v(x,t))dx.
\end{array}
\end{equation*}
Observe first that
\[
\bar{u}(t)=\bar{u}_{0}e^{-\mu t}+D_{1}\int_{0}^{t}e^{-\mu(t-s)}g(s)ds.
\]
Due to
\begin{equation*}
\int_{0}^{t}e^{\mu s}g(s)ds \leq \frac{e^{\mu t}}{\sqrt{2\mu}}||g||_{L^{2}(0,T)} \quad \quad t \in (0,T),
\end{equation*}
we have
\begin{equation*}
\bar{u}(t) \leq \bar{u}_{0}e^{-\mu t} + \frac{D_1}{\sqrt{2\mu}}||g||_{L^{2}(0,T)} \leq \bar{u}_{0} + \frac{D_1}{\sqrt{2\mu}}||g||_{L^{2}(0,T)}.
\end{equation*}
This, if $\int_0^\infty g^2 < \infty$, we have the time invariant bound
\begin{equation*}
\bar{u}(t) \leq \bar{u}_{0} + \frac{D_1}{\sqrt{2\mu}}||g||_{L^{2}(0,\infty)}, t >0.
\end{equation*}
Now denoting by $\tu$ the difference of the averages of two solutions, we obtain
\[
\dt \tu (t)= -\mu \tu(t),
\]
which in turn yields
\begin{gather}  \label{uave}
\tu (t) = e^{-\mu t}\tu(0).
\end{gather}
In particular, together with the time invariant bound obtained above, this means $\bar u(t) \ra \bar u_c$ for some $0 \le \bar u_c < \infty$.

Similarly, using the fact that $\bar u(t)$ is uniformly bounded in time, it readily follows that
$\sup_{t\ge 0} \bar b(t) < \infty$. Moreover, denoting $\tb$ the difference of the average of two solutions, we have
\[
\dt \tb (t) + \mu_b \tb (t) = \beta_2 \tu (t).
\]
Using \eqref{uave}, we readily obtain an explicit solution 
\[
\tb = e^{-\mu_bt}\left[\tb(0) + \frac{\beta_2 \tu(0)}{\mu - \mu_b}\right] -  
\frac{\beta_2 \tu(0)}{\mu - \mu_b}e^{-\mu t}.
\]
This immediately yields that $\tb(t) \ra 0$ as $t \ra \infty$. Together with the time uniform bound for $\bar b(t)$, it follows as before that $\bar b (t) \ra \bar b_c$ as $t \ra \infty$, where $0 \le \bar b_c < \infty$.

\comments{
\begin{equation*} \begin{array}{l}
\bar{u}(t)=\bar{u}_{0}e^{-\mu t}+D_{1}\int_{0}^{t}e^{-\mu(t-s)}g(s)ds \\ \\
\bar{a}(t)=\bar{a}_{0}e^{-\mu_{a}t}+\beta_{1}\int_{0}^{t}e^{-\mu_{a}(t-s)}\bar{v}(s)ds \\ \\
\bar{b}(t)=\bar{b}_{0}e^{-\mu_{b}t}+\beta_{2}\int_{0}^{t}e^{-\mu_{b}(t-s)}\bar{u}(s)ds
\end{array}
\end{equation*}
along with
\begin{equation*}
\frac{d\bar{v}}{dt}(t)=D_{2}h(b(0,t))+\int_{0}^{1}\rho(b(x,t))f(v(x,t))dx \;.
\end{equation*}
Now
\begin{equation*}
\int_{0}^{t}e^{\mu s}g(s)ds \leq \frac{e^{\mu t}}{\sqrt{2\mu}}||g||_{L^{2}(0,T)} \quad \quad t \in (0,T)
\end{equation*}
so 
\begin{equation*}
\bar{u}(t) \leq \bar{u}_{0}e^{-\mu t} + \frac{D_1}{\sqrt{2\mu}}||g||_{L^{2}(0,T)} \leq \bar{u}_{0} + \frac{D_1}{\sqrt{2\mu}}||g||_L^{2}(0,T)
\end{equation*}
for all $t \in (0,T)$. Substituting this into the $\bar{b}$ equation gives 
\begin{gather*}
\bar{b}(t) \leq \bar{b}_{0}e^{-\mu_{b}t}+ \frac{\beta_{2}D_{1}}{\mu_{b}\sqrt{2\mu}}||g||_{L^{2}(0,T)}\left (1-e^{-\mu_{b}t}\right ) + \beta_{2}\bar{u}_{0}\left \{ \begin{array}{ll} 
te^{-\mu_{b}t}  & \mbox{if $\mu_{b}=\mu$} \\ \\
\frac{e^{-\mu t}-e^{-\mu_{b}t}}{\mu_{b}-\mu}  &  \mbox{if $\mu_{b} \neq \mu$}
\end{array} \right. \\ \\
\leq \bar{b}_{0}+ \frac{\beta_{2}D_{1}}{\mu_{b}\sqrt{2\mu}}||g||_{L^{2}(0,T)} + \frac{\beta_{2}\bar{u}_{0}}{\mu_{b}} \;\;.
\end{gather*}
}

For the $\bar{a}$ and $\bar{v}$ case, let $h_M>0$ and $\rho_{M}>0$ be such that $h(b)\leq h_M$ and $\rho(b)\leq \rho_{M}$ for $b \geq 0$ and denote $f_M = \sup_v |f(v)| < \infty $ (we used the assumption that $f$ is bounded).  Then
\begin{equation*} \begin{array}{ll}
\bar{v}(t)\leq \bar{v}_{0} + (D_2h_M + \rho_Mf_M)t  & \mbox{for $t \in (0,T)$, and} \\ \\
\bar{a}(t) \leq \bar{a}_{0}+  \frac12 (D_2h_M + \rho_Mf_M)t^2 & \mbox{for all $t \in (0,T)$.}
\end{array}
\end{equation*}

The above arguments lead us to the intriguing possibility that the averages $\bar a(t)$ and $\bar v(t)$ can increase polynomially in $t$ while $\bar u, \bar b$ converge to a finite number  as $t \ra \infty$. A detailed analysis of the asymptotic behavior of the entire system in relation to the physical (biological) parameters and its biological consequences will be explored in a future work.

\subsection{A Formal Perturbation}
Now we indicate in an informal way that the variables do what they are supposed to do physically, that is, they ``stack'' up next to the boundary $x=1$. For this we scale problem (\ref{sys}) and consider the long time behavior as a perturbation problem, and then show a numerical solution to the system. For purposes of the calculations below we make a few more assumptions. First, we consider the original spatial domain as $0<x<L$, which provides a convenient length scale. Next, we consider the `linearized' problem in that we assume $f(v)=\rho_{0}v$, and let $\rho$ be a constant. Let $(U, V, A, B)$ be characteristic values for $(u, v, a, b)$, so $\tilde{u}=u/U$, etc., and define dimensionless variables $\tilde{x}=x/L$, $\tilde{t}=t/T$. Assume $D_{4}=D_{3}$, and for some constant $k>0$, $D_{1}=D_{2}=kD_{3}$. Let $T=L^{2}/D_{3}$,  $\Delta=\alpha_{1} A/kD_{3}$, $\Gamma=\alpha_{2} B/kD_{3}$, $\tilde{\mu}_{a}=\mu_{a}T$, $\tilde{\mu}_{b}=\mu_{b}T$, $G_{1}=\beta_{1}TV/A$, $G_{2}=\beta_{2}TU/B$. These parameters will be considered order one. We also consider the case where $k$ is sufficiently large compared to $\mu$ or $\rho$ (``slow'' cell migration in the intima), that the scaled $\mu$ and $\rho$ are small; that is, $\varepsilon \tilde{\mu}=\mu T$, and $\varepsilon \tilde{\rho}=\rho T$, where $0<\varepsilon <<1$. Then, considering only the steady state case (and dropping the tilde notation), we have the system
\begin{equation} \left \{\begin{array}{ll} \label{ss}
0=u_{xx}-\Delta(ua_{x})_{x}-\varepsilon \mu u  & \quad 0 < x < 1\;,\, t>0 \\ \\
0=v_{xx}-\Gamma(vb_{x})_{x}+\varepsilon \rho v \\ \\
0=a_{xx} + G_{1}v - \mu_{a}a \\ \\
0=b_{xx} + G_{2}u - \mu_{b}b 
\end{array} \right.
\end{equation}
with boundary conditions
\begin{equation} \left \{ \begin{array}{l} 
u_{x}(0)=a_{x}(0)=b_{x}(0)=0\;,\; v_{x}(0)=h(b(0)) \\ \\
u_{x}(1)=Q\;,\; v_{x}(1)=a_{x}(1)=b_{x}(1)=0
\end{array} \right.
\end{equation}
where we assume $\lim_{t \to \infty}g(t)=Q$ exists. This is a regular perturbation problem, so the solution is analytic in $\varepsilon$. Writing $u \sim u_{0}+\varepsilon u_{1} + \ldots$, with analogue expansions for $v, a, b$, the order one problem is 
\begin{equation} \left \{\begin{array}{ll} \label{ss1}
u_{0}^{''}-\Delta(u_{0}a_{0}^{'})^{'} =0 & \quad 0 < x < 1\;,\, t>0 \\ \\
v_{0}^{''}-\Gamma(v_{0}b_{0}^{'})^{'} =0\\ \\
a_{0}^{''} + G_{1}v_{0} - \mu_{a}a_{0}=0 \\ \\
b_{0}^{''} + G_{2}u_{0} - \mu_{b}b_{0}=0 
\end{array} \right.
\end{equation}
with boundary conditions
\begin{equation} \left \{ \begin{array}{l} 
u_{0}^{'}(0)=a_{0}^{'}(0)=b_{0}^{'}(0)=0\;,\; v_{0}^{'}(0)=h(b_{0}(0)) \\ \\
u_{0}^{'}(1)=Q\;,\; v_{0}^{'}(1)=a_{0}^{'}(1)=b_{0}^{'}(1)=0\;.
\end{array} \right.
\end{equation}
One way of writing the solution is 
\begin{equation*} \begin{array}{l}
u_{0}(x)=u_{0}(0)e^{\Delta (a_{0}(x)-a_{0}(0))} \\ \\
v_{0}(x)e^{h_{0}\int_{0}^{x}\frac{dz}{v_{0}(z)}}=v_{0}(0)e^{\Gamma (b_{0}(x)-b_{0}(0))} \\ \\
a_{0}(x)=G_{1}\int_{0}^{1}G(x,s)v_{0}(s)ds \\ \\
b_{0}(x)=G_{2}\int_{0}^{1}\mathcal{G}(x,s)u_{0}(s)ds 
\end{array}
\end{equation*}
where $h_{0}=h(b_{0}(0))$, which is really a functional of $a_{0}$. The $G$ and $\mathcal{G}$ are appropriate Green's functions for the $a$ and $b$ problems. For example, with $\eta=\sqrt{\mu_{a}}$, 
\begin{equation*}
G(x,s)=\left \{ \begin{array}{ll}\frac{\cosh(\eta s)-\tanh(\eta)\sinh(\eta s)}{\eta \tanh(\eta)(\sinh^{2}(\eta s)-\sinh(\eta s)+1)}\;\cosh(\eta x) & x<s \\ \\
\frac{\cosh(\eta x)-\tanh(\eta)\sinh(\eta x)}{\eta \tanh(\eta)(\sinh^{2}(\eta x)-\sinh(\eta x)+1)}\;\cosh(\eta s) & x>s
\end{array} \right.
\end{equation*}
($\mathcal{G}$ is the same except $\eta=\sqrt{\mu_{b}}$.) We do not have specific values for $u(0), v(0), a(0), b(0)$ or those at $x=1$, but we can make some qualitative comments on the solution at this level. 
Assume that the principle qualitative behavior of the solution to (\ref{ss}) is given by the solution of the lowest order problem, and with positivity of $u(0)$ and $v(0)$, and knowing the formulas for the Green's functions $G(x,s)$ and $\mathcal{G}(x,s)$, we know $u_{0}(x) >0, u_{0}^{'}(x)>0, u_{0}^{''}(x)>0$ on $(0,1)$. Also, $v_{0}(x)>0$, and $a_{0}(x)>0, a_{0}^{'}(x)>0, a_{0}^{''}(x)>0$. Again, $b_{0}(x)>0, b_{0}^{'}(x)>0, b_{0}^{''}(x)>0$, and therefore, $v_{0}^{'}(x)>0, v_{0}^{''}(x)>0$. This is all consistent with these cells and chemicals piling up at the boundary $x=1$. 

If the flux of $u(x,t)$ at $x=1$ becomes zero for $t$ greater than some $T>0$, then $Q=0$. It is likely that $b(0)$ is negligible, and since $h(0)=0$, then these conditions given homogeneous Neumann boundary conditions for system (\ref{ss}). For purposes of this next calculation, assume small chemical decay terms; that is, replace parameters $\mu_{a}, \mu_{b}$ with $\varepsilon \mu_{a}, \varepsilon \mu_{b}$. Then, instead of problem (\ref{ss1}), we have for the lowest order problem
\begin{equation} \left \{\begin{array}{ll} \label{ss0}
u_{0}^{''}-\Delta(u_{0}a_{0}^{'})^{'} =0 & 0 < x < 1\;,\, t>0 \\ \\
v_{0}^{''}-\Gamma(v_{0}b_{0}^{'})^{'} =0\\ \\
a_{0}^{''} + G_{1}v_{0} =0 \\ \\
b_{0}^{''} + G_{2}u_{0} =0 \;\;.
\end{array} \right.
\end{equation}

With our Neumann boundary conditions,  $u_{0}^{'}-\Delta u_{0}a_{0}^{'}=0$, so $u_{0}=Ce^{\Delta a_{0}}$, for some constant $C$. Similarly, $v_{0}=De^{\Gamma b_{0}}$ for some constant $D$. Substituting into (\ref{ss0}), we have 
\begin{equation} \begin{array}{l}
a_{0}^{''}+G_{1}De^{\Gamma b_{0}}=0\;, \quad \quad a_{0}^{'}(0)=a_{0}^{'}(1)=0 \\ \\
b_{0}^{''}+G_{2}Ce^{\Delta a_{0}}=0\;\;, \;\quad \quad b_{0}^{'}(0)=b_{0}^{'}(1)=0
\end{array}
\end{equation}
Again assume $\varepsilon <<1$ so all the interesting steady state behavior is contained in the lowest order terms. For convenience drop the ``0'' subscript notation. Write 
\begin{equation*} \begin{array}{l}
a^{''}e^{\Delta a}+G_{1}De^{\Delta a +\Gamma b}=0=b^{''}e^{\Gamma b}+G_{2}Ce^{\Delta a +\Gamma b}\;, \quad \mbox{ hence} \\ \\
G_{2}Ce^{\Delta a +\Gamma b}=-b^{''}e^{\Gamma b}=-\frac{G_{2}C}{G_{1}D}a^{''}e^{\Delta a} \;.
\end{array}
\end{equation*}
That is, $b^{''}e^{\Gamma b}=Ka^{''}e^{\Delta a}$, where $K=G_{2}C/G_{1}D$. If we think of $a$ and $b$ as independent variables, then this expression should be constant. Say $b^{''}e^{\Gamma b}=R$, or $b^{''}-Re^{-\Gamma b}=0$. Thus, $b^{''}b^{'}-Rb^{'}e^{-\Gamma b}=0$, which implies $\frac{1}{2}(b_{x})^{2}+\frac{R}{\Gamma}e^{-\Gamma b}=P$, which we assume is a positive constant. Therefore, $b^{'}=\pm \sqrt{2(P-(R/\Gamma)e^{-\Gamma b})}$, or 
\begin{equation*} \begin{array}{l}
\int \frac{db}{\sqrt{P-\frac{R}{\Gamma}e^{-\Gamma b}}}=\pm \sqrt{2}(x+x_{0})\;,\; \quad \mbox{so} \\ \\
\int \frac{e^{\Gamma b/2}db}{\sqrt{e^{\Gamma b}-\frac{R}{\Gamma P}}}=\pm \sqrt{2P}(x+x_{0})
\end{array}
\end{equation*}
Let $z=e^{\Gamma b/2}\;(>1)$, then 
\begin{equation*}
\pm \sqrt{2P}(x+x_{0})=\frac{2}{\Gamma}\int \frac{dz}{\sqrt{z^{2}-N}}\;,
\end{equation*}
where $N:=R/\Gamma P$. This gives $\pm \sqrt{2P}(\Gamma/2)(x+x_{0})=\pm \ln[z\pm\sqrt{z^{2}-N}]$, or $e^{\pm \sqrt{P/2}\Gamma(x+x_{0})}=z\pm \sqrt{z^{2}-N}$. If we let $e^{- \sqrt{P/2}\Gamma(x+x_{0})}=z-\sqrt{z^{2}-N}<z+\sqrt{z^{2}-N}=e^{ \sqrt{P/2}\Gamma(x+x_{0})}$, then
\begin{equation*}
e^{\Gamma b/2}=z=\frac{z+\sqrt{z^{2}-N}+z-\sqrt{z^{2}-N}}{2}=\cosh[\Gamma \sqrt{\frac{P}{2}}(x+x_{0})]\;,
\end{equation*}
so
\begin{equation*}
b(x)=\frac{2}{\Gamma}\ln(\cosh[\Gamma \sqrt{\frac{P}{2}}(x+x_{0})])\;.
\end{equation*}
Similarly, 
\begin{equation*}
\frac{1}{2}(a_{x})^{2}+\frac{R}{k\Delta}e^{-\Delta a}=K_{1} \;\;\Rightarrow \;\; \int \frac{e^{\Delta a/2}da}{\sqrt{e^{\Delta a}-(R/K\Delta K_{1})}}=\pm \sqrt{2K_{1}}(x+x_{0})\;.
\end{equation*}
This leads to 
\begin{equation*}
a(x)=\frac{2}{\Delta}\ln(\cosh[\Delta \sqrt{\frac{K_{1}}{2}}(x+x_{0})])\;.
\end{equation*}
Note that $b(x)>0, b^{'}(x)>0$, and $b^{''}(x)>0$ (and the same with $a(x)$), so $a$ and $b$ have the correct qualitative graph for accumulating its concentration at $x=1$.

Now, given those expressions for $a=a_{0}(x)$ and $b=b_{0}(x)$, we have 
\begin{equation*}
u_{0}(x)=C\cosh^{2}[\Delta \sqrt{\frac{K_{1}}{2}}(x+x_{0})]\;, \quad \quad v_{0}(x)=D\cosh^{2}[\Gamma \sqrt{\frac{K_{1}}{2}}(x+x_{0})]\;.
\end{equation*}
(If $x_{0}=0$, then $C=u_{0}(0), D=v_{0}(0)$.) Note that $u_{0}^{'}(x)>0, u_{0}^{''}(x)>0$ (and the same for $v_{0}$), so $u_{0}, v_{0}$ have the same qualitative graph as $a_{0}$ and $b_{0}$ have, with their maximum at $x=1$.

Figure 1 also shows the concentrations of $u$ and $v$ evolve to the boundary $x=1$ that represent the boundary between the intima layer of the artery, and the lumen (central blood flow region). 

\section{Discussion}
The present analysis of this cross-chemotaxis system is the core of a more extensive model system for an arterial (atherosclerotic) plaque model, whose modeling goal is to determine a quantitative understanding of plaque vulnerability mechanisms. However, the system analyzed here should have independent interest since it has potentially a richer pattern formation structure, than the single cell, single chemical (``auto-chemotaxis'') case, particularly in higher space dimensions. If, upon scaling our model, some parameters are relatively small compared to other parameters, this would open opportunity to do some perturbation analysis on the system. 

A particular modeling assumption we made was to keep the chemoattractant terms of the simplest Keller-Segel kind. In our biological situation, the chemicals tend to bind to cell surface receptors, and a more general form of chemoattractant mechanism might be more appropriate. For example, we could replace the $\alpha_{1}ua_{x}$ (respectively, $\alpha_{2}vb_{x}$) term by $\alpha_{1}uB_{1}(a)a_{x}$ (respectively, $\alpha_{2}vB_{2}(b)b_{x}$). At the present time we do not have the data to determine what form the $B_{1}(\cdot)$ and $B_{2}(\cdot)$ would take. 

In the full arterial plaque model some accounting of fluid shear stress on the endothelial cell layer separating the blood from the plaque will be considered. This is because differences in shear stress trigger different chemical pathways in the endothelial cells that have a measurable effect on chemical interactions at the boundary of the plaque. In the simplest scenario concerning system (\ref{sys}), accounting for this external signal would require modification of the boundary conditions for $a$ and $b$ at $x=1$ to be non-homogeneous flux conditions depending on the strength of the signal, which in turn leads to Robin boundary conditions for $u$ and $v$ at $x=1$. We have not explored this modification yet.   

Another aspect of arterial plaque modeling is that many more chemo-attractants are present in the plaque, along with other cell types. In a more detailed model formulation, foam cells and T-lymphocytes, for example, should be accounted for. These, along with endothelial cells, and not just smooth muscle cells and macrophages, produce chemicals that affect the migration, motility, and proliferation of various cell populations. A similar observation can be made in various inflammation studies and in wound healing modeling. Hence, we have formulated a model system involving an arbitrary number of cell populations and an arbitrary number of chemical attractants, and we will analyze the system in a future paper. 

Last, but not least, plaques are three dimensional, and exploration of such cross-chemoattractant problems discussed here needs to be carried over to higher dimensions when the questions being asked needs such a generalization.

\section{References}
\begin{enumerate}
\item Amann, H, Dynamic theory of quasilinear parabolic equations II. Reaction-diffusion systems, Diff. and Int. Equations 3(1)(1990), 113-75.
\item Amann, H, Dynamic theory of quasilinear parabolic equations III. Global existence, Math. Z. 202(1989), 219-250.
\item Biler, P, Local and global solvability of some parabolic systems modeling chemotaxis, Adv. Math. Sci. Appl. 8(1998), 715-743.
\item Gajewski, H and Zacharias, K, Global behavior of a reaction-diffusion system modeling chemotaxis, Math. Nachr. 195(1998), 77-114.
\item Hillen, T, and Painter, KJ, A user's guide to PDE models for chemotaxis, J. Math. Biol. 58(2009), 183-217.
\item  Hillen T, and Potapov A, The one-dimensional chemotaxis model: global existence and asymptotic profile, Math. Meth. Appl. Sci. \textbf{27}(2004), 1783-1801.
\item Horstmann, D, Lyapunov functions and $L^p$-estimates for a class of reaction-diffusion systems, Colloq. Math. 87(1)(2001), 113-127.
\item Horstmann, D, From 1970 until present: the Keller-Segel model in chemotaxis and its consequences, I. Jahresberichte DMV 105(3)(2003), 103-165.
\item Jager, W and Luckhaus, S, On explosions of solutions to a system of partial differential equations modeling chemotaxis, Trans. AMS 329(2)(1992), 819-=824.
\item Lauffenburger, D, Aris, R and Keller, K, Effects of cell motility and chemotaxis on microbial population growth, Biophys. J. 40(1982), 209-219.
\item Little, MP, Gola, A and Tzoulaki, I, A model of cardiovascular disease giving a plausible mechanism for the effect of fractionated low-dose ionizing radiation exposure, PloS Comput. Biol. 5(10)(2009), 1-11.
\item Osaki, K and Yagi, A, Finite dimensional attractor for one-dimensional Keller-Segel equations, Funkcial. Ekvac. 44(2001), 441-469.
\item Osaki, K, Tsujikawa, T, Yagi, A and Mimura, M, Exponential attractor for a chemotaxis-growth system of equations, Nonlinear Analysis 51(2002), 119-144.
\item Painter, KJ and Hillen, T, Volume-filling and quorum-sensing in models for chemosensitive movement, Can. Appl. Math. Quart. 10(4)(2002), 501-543.
\item Perthame, B, \textit{Transport Equations in Biology}, Birkhauser, Basel, 2007.
\item Suzuki, T, \textit{Free Energy and Self-Interacting Particles}, Birkhauser, Boston, 2005.
\item Wang, X, Qualitative heavier of solutions of chemotactic diffusion systems: effects of motility and chemotaxis and dynamics, SIAM J. Math Anal 31(3)(2000), 535-560.
\item Wrzosek, D, Global attractor for a chemotaxis model with prevention of overcrowding, Nonlinear Analysis 59(2004), 1293-1310.
\item Zeng, B, Steady states of a chemotaxis system, Appl. Math. 1(1990), 78-83.

\end{enumerate}

\end{document}